\documentclass[a4paper,11pt,oneside,reqno]{article}
\usepackage{amsbsy,amsfonts,amssymb,amsmath}
\usepackage{amsthm, mathrsfs,bbm}

\usepackage{lmodern}
\usepackage[T1]{fontenc}

\evensidemargin=0pt
\oddsidemargin=.20cm
\hoffset=6pt
\voffset=30pt
\textwidth 14.7cm
\setlength{\textheight}{20cm}
\headheight 11pt
\headsep 19pt
\footskip 24pt
\parindent 12pt
\topskip 10pt

\pagestyle{myheadings}
\hyphenpenalty=1000
\makeatletter
\renewcommand{\@seccntformat}[1]{\csname the#1\endcsname.\,\,}
\renewcommand\section{\@startsection {section}{1}{\z@}{-2ex \@plus -1ex \@minus -.2ex}{2.3ex \@plus.2ex}{\centering\large\bfseries}}

\renewcommand\subsection{\@startsection {subsection}{1}{\z@}{-3ex \@plus -1ex \@minus -.2ex}{2.3ex \@plus.2ex}{\centering\normalfont\bfseries}}

\newcommand{\adddot@section}{.}
\makeatother
\newtheorem{theorem}{\noindent Theorem}
\newtheorem{lemma}{\noindent Lemma}

\newtheorem{proposition}{\noindent Proposition}

\newtheorem{remark}{\noindent Remark}


\newcounter{ex}

\newcommand{\ind}[1]{\mathbbm{1}_{#1}}

\newcommand{\la}{\lambda}
\newcommand{\sR}{\mathbb{R}}

\newcommand{\set}[1]{\{#1\}}

\newcommand{\mset}[2]{\{#1:#2\}}

\newcommand{\kla}[1]{(#1)}

\newcommand{\bkla}[1]{\left(#1\right)}

\newcommand{\esym}{{\mathbbm E}}
\newcommand{\psym}{{\mathbbm P}}

\newcommand{\EE}[1]{\esym[#1]}
\newcommand{\PP}[1]{\psym(#1)}

\newcommand{\EX}[2]{\esym_{#1}[#2]}

\newcommand{\micro}[1]{{\scriptscriptstyle #1}}

\newcommand{\mi}{\wedge}
\newcommand{\ma}{\vee}

\newcommand{\maxs}[3]{\vee_{#1=#2}^{#3}}
\newcommand{\Mins}[3]{\bigwedge_{#1=#2}^{#3}}

\newcommand{\limY}{Z}
\newcommand{\arro}{\Rightarrow}
\newcommand{\dto}{\stackrel{\micro{d}}{\arro}}

\newcommand{\disas}{\stackrel{\micro{d}}{=}}

\newcommand{\lex}[1]{{\itshape #1}}

\newcommand{\zz}{\eta}
\newcommand{\ku}[1]{^{\scriptscriptstyle(#1)}}
\newcommand{\qq}[1]{\eta\ku #1}

\newcommand{\ee}{\widehat\eta}
\newcommand{\zx}{\zz_0}
\newcommand{\tra}[1]{{\mathscr P}_{#1}}
\newcommand{\trb}[2]{{\mathscr P}_{#2}\ku#1}

\newcommand{\gen}{{\mathscr A}}
\newcommand{\len}{{\mathscr L}}
\newcommand{\ben}[1]{{\mathscr A\ku{#1}}}

\newcommand{\dom}[1]{{\mathcal D}_{#1}}
\newcommand{\cnt}{{\mathcal C}_{0}}

\newcommand{\cond}[1]{{\rm (S#1)}}

\renewcommand{\phi}{\varphi}

\newcommand{\DD}{{\mathcal D}[0,\infty)}

\begin{document}
\abovedisplayskip=7pt plus 1pt minus 4pt
\belowdisplayskip=7pt plus 1pt minus 4pt
\belowdisplayshortskip=3pt plus 1pt minus 2pt

\title{Weak Convergence of Subordinators to Extremal Processes}

\author{
Offer Kella\thanks{Department of Statistics; The Hebrew
University of Jerusalem; Mount Scopus, Jerusalem 91905; Israel
({\tt Offer.Kella@huji.ac.il}).}\ \thanks{Supported in part by grant No. 434/09 from the Israel Science Foundation and the Vigevani Chair in Statistics.} \and Andreas L\"{o}pker\thanks{Department of
Economics and Social Sciences; Helmut Schmidt
University Hamburg, 22043 Hamburg;
Germany ({\tt lopker@hsu-hh.de}).}}

\maketitle
\begin{abstract} For certain subordinators $(X_t)_{t\geq 0}$ it is shown that the  process $(-t\log X_{ts})_{s>0}$ tends to an extremal process $(\ee_s)_{s>0}$ in the sense of convergence of the finite dimensional distributions.  Additionally it is also shown that $(z\mi (-t\log X_{ts}))_{s\ge 0}$ converges weakly to $(z\mi\ee_s)_{s\ge0}$ in $\DD$, the space of c\`adl\`ag functions equipped with Skorohod's $\mbox{J}_1$ metric.
\end{abstract}

\section{Introduction}
It was shown in \cite{LevEnis} that if $(X_t)_{t>0}$ is a family of positive random variables and if $X$ is a
non-constant random variable with distribution function $F$, then $X_{t}^{-t}$ converges weakly to $X$ as $t\to 0$ if and only if $\psi_{t}(u^{1/t})\to 1-F(u)$ as $t\to 0$ at all
continuity points $u$ of $F$, where $\psi_t$ is the Laplace transform of $X_t$.  In \cite{AL-BAR} it was found that for the convolution family  $\psi_t(u)=\varphi(u)^t$, where $\varphi$ is the Laplace transform of an infinitely divisible random variable, i.e. if the process $X_t$ is a subordinator, the limit distribution, if not concentrated on a single point, is always a Pareto distribution. Equivalently we can formulate the convergence in terms of the convergence of $-t\log X_t$ as $t$ tends to zero, with the only possible limit distribution being the exponential distribution. We will apply and extend these results to show that in fact the process $(-t\log X_{st})_{s>0}$ converges to a, so called, {\it extremal process} $(\ee_s)_{s>0}$, to be reviewed in Section~3. We will first observe the convergence of the finite dimensional distributions and then establish weak convergence of a truncated version in $\DD$, the space of c\`adl\`ag functions equipped with Skorohod's $\mbox{J}_1$ metric. Since the prelimit and limit processes are Markovian, this will be done by proving uniform convergence of the associated generators and applying the necessary theory from \cite{EK} for this setup.

\section{Setup, review and convergence of finite dimensional distributions}
Let $(X_{t})_{t\geq 0}$ be a pure jump subordinator, i.e. an increasing L\'evy process with
\begin{equation}\psi _{t}(u)={\mathbbm E}(e^{-uX_{t}})=e^{-t\phi (u)}\ ,\end{equation}
where
\begin{equation}
\phi (u)=\int_{0}^{\infty }(1-e^{-ux})\,d\nu (x)\ .
\end{equation}
and the L\'evy measure $\nu$ in this case must satisfy $\nu(-\infty,0]=0$, $\nu(1,\infty)<\infty$ and
\begin{equation}
\rho=\int_{[0,1]}u\,d\nu (u)=\int_0^1\nu(x,1]dx<\infty\ .
\end{equation}
 We recall that  $G_t(x)=\PP{X_t\leq x}$ is an infinitely divisible distribution.

In what follows $\mi$ and $\ma$ denote minima and maxima (respectively),  $\disas$ denotes equality in distribution, $\dto$ is for convergence in distribution and $x\downarrow x_0$ means  $x\to x_0,\ x>x_0$. Finally, for finite $\gamma>0$, denote by $E_\gamma$ an exponential random variable with mean $1/\gamma$.

In \cite{AL-BAR} the following result was proved.
\begin{theorem}
\label{t1} Let $\limY$ be a positive random variable
which is not concentrated at one point and let $F(x)=\PP{Z\leq x}$. The following statements are
equivalent:
\begin{enumerate}
\item[{\cond 1}] \label{a1} $-t\log X_t\dto
\limY$ as $t\downarrow 0$.
\item[{\cond 2}] \label{a2} $t\phi(u^{1/t})\to - \log(1-F(u))$ as $t\downarrow 0$, for all continuity points $u$ of $F$.
\item[{\cond 3}] \label{a3} $-t\log X_t\dto E_\gamma$ as $t\downarrow 0$ for some finite $\gamma >0$.
\end{enumerate}
Furthermore, for any finite $\gamma >0$ the following statements are equivalent:
\begin{enumerate}
\item[{\cond 4}] \label{a4} $-t\log X_t\dto E_{\gamma}$ as $t\downarrow 0$.

\item[{\cond 5}] \label{a5} $\phi(s)/\log s\to \gamma$ as $s\to\infty$.

\item[{\cond 6}] \label{a6} $\log G_1(x)/\log x\to \gamma$ as $x\downarrow 0$.

\item[{\cond 7}] \label{a7} $\nu(x,\infty)/\log x\to -\gamma$ as $x\downarrow 0$.
\end{enumerate}
\end{theorem}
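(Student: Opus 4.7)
I would prove the two blocks of equivalences in turn. The first block S1$\Leftrightarrow$S2$\Leftrightarrow$S3 reduces to the Lev/Enis criterion combined with a functional-equation argument that forces the limit law to be exponential; the second block S4$\Leftrightarrow$S5$\Leftrightarrow$S6$\Leftrightarrow$S7 then follows from the first via classical Tauberian theorems relating the asymptotics of $\phi$, $\nu$, and $G_1$.

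The equivalence S1$\Leftrightarrow$S2 is immediate from \cite{LevEnis}: the continuous mapping theorem gives $-t\log X_t\dto Z$ iff $X_t^{-t}=e^{-t\log X_t}\dto e^Z$, and the cited criterion restates this as convergence of $\psi_t(u^{1/t})$; taking $-\log$ and using $\psi_t=e^{-t\phi}$ yields S2 in the stated form. For S1$\Rightarrow$S3 I exploit the convolution structure $\psi_t=e^{-t\phi}$. Fix $a>0$ and apply S2 twice: once with $t$ replaced by $t/a$, and once with $u$ replaced by $u^a$. Since $u^{a/t}=(u^a)^{1/t}$, both expressions describe the asymptotics of the same quantity $t\phi((u^a)^{1/t})$, and comparing the two limits produces the functional identity $1-F(u^a)=(1-F(u))^a$ on a dense set of continuity points. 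Setting $H(v)=-\log(1-F(e^v))$ this reads $H(av)=aH(v)$; monotonicity of $F$ forces $H$ linear, so $1-F$ has a pure power tail, which upon translating back through the $\log$-relation between $Z$ and $e^Z$ means $Z\disas E_\gamma$ for some $\gamma\in(0,\infty)$. Non-degeneracy of $Z$ excludes the boundary cases $\gamma\in\{0,\infty\}$, and the converse S3$\Rightarrow$S1 is trivial.

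For the second block, S4$\Leftrightarrow$S5 follows by specialising S2 to the exponential limit and substituting $s=u^{1/t}$: the resulting asymptotic identifies $\phi(s)/\log s\to\gamma$ along geometric sequences in $s$, and monotonicity of $\phi$ (inherited from the L\'evy-Khintchine representation) extends this to the continuous limit $s\to\infty$. The equivalence S5$\Leftrightarrow$S7 follows from a Karamata-type split of the L\'evy integral at the threshold $x=1/u$: the piece on $(1/u,\infty)$ is comparable to $\nu(1/u,\infty)$ since $1-e^{-ux}$ is bounded away from $0$ and $1$ there, while the piece on $(0,1/u]$ is controlled by $u\int_0^{1/u}x\,d\nu(x)$, which is $O(1)$ thanks to the integration-by-parts identity $\int_0^{1/u}x\,d\nu(x)=\int_0^{1/u}\nu(s,\infty)\,ds-(1/u)\nu(1/u,\infty)$ and hence $o(\log u)$; the common logarithmic asymptotics then give, via $x=1/u$, the equivalence with S7. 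Finally S5$\Leftrightarrow$S6 follows from a de Bruijn-type Tauberian theorem applied to $e^{-\phi(u)}=\int_0^\infty e^{-ux}\,dG_1(x)$: the slow variation $\phi(u)\sim\gamma\log u$ at $\infty$ corresponds exactly to $-\log G_1(x)\sim-\gamma\log x$ at $0$.

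The main obstacle is the functional-equation step in S1$\Rightarrow$S3. Because S2 delivers convergence only at continuity points of $F$, the identity $1-F(u^a)=(1-F(u))^a$ has to be established first on a dense product subset of $(0,\infty)^2$ and then propagated to all $u$ via right-continuity and monotonicity of $F$; the Cauchy-type equation $H(av)=aH(v)$ must then be solved in the class of monotone functions rather than continuous ones, where the standard argument applies directly. One must also verify that the non-degeneracy of $Z$ genuinely rules out the degenerate solutions $H\equiv 0$ and $H\equiv+\infty$. The Tauberian passages, though classical, also require some care in tracking uniformity so as to preclude oscillatory behaviour of $\phi$ and $\nu$ near the relevant boundary.
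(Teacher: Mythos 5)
The paper does not actually prove this theorem: it is quoted from \cite{AL-BAR} (``In \cite{AL-BAR} the following result was proved''), so there is no in-paper argument to compare yours against. Judged on its own terms, your architecture --- Lev--Enis for S1$\Leftrightarrow$S2, a scaling/functional-equation argument forcing the exponential (equivalently Pareto) limit for S1$\Rightarrow$S3, and Abelian/Tauberian arguments linking $\phi$, $G_1$ and $\nu$ for S4--S7 --- is the natural one and very likely the route of the cited source. The functional-equation step is correctly set up: replacing $t$ by $t/a$ in S2 and comparing with the substitution $u\mapsto u^a$ does yield $L(u^a)=aL(u)$ for the limit $L$, and monotonicity of $F$ handles both the passage from a co-countable set of $u$ to all $u$ and the solution of the resulting Cauchy equation. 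One bookkeeping point you should not gloss over: with $F$ the law of $Z$ rather than of $e^Z$, the Lev--Enis criterion literally produces $t\phi(u^{1/t})\to-\log(1-F(\log u))$; when $Z\disas E_\gamma$ this limit is $\gamma\log u$, not $\gamma u$, and it is the $\gamma\log u$ form that your substitution $s=u^{1/t}$ needs in order to output $\phi(s)/\log s\to\gamma$ in S4$\Leftrightarrow$S5. Your sketch silently identifies the two distribution functions.

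The one step that would fail as written is S7$\Rightarrow$S5. Observing that $1-e^{-ux}$ is ``bounded away from $0$ and $1$'' on $(1/u,\infty)$ only gives $(1-e^{-1})\,\nu(1/u,\infty)\le\int_{1/u}^\infty(1-e^{-ux})\,\nu(dx)\le\nu(1/u,\infty)$, hence $\phi(u)\asymp\log u$ up to multiplicative constants; since S5 and S7 assert the \emph{same} constant $\gamma$, this is not enough. What is needed is the sharper statement $\int_{1/u}^\infty e^{-ux}\,\nu(dx)=O(1)$, which does hold under S7 but requires integrating by parts against $N(x)=\nu(x,\infty)\sim-\gamma\log x$, where two terms of size $e^{-1}\gamma\log u$ cancel. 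Likewise the converse S5$\Rightarrow$S7 is a genuine Tauberian step: the clean route is the identity $\phi(u)=u\int_0^\infty e^{-ux}N(x)\,dx$, Karamata's Tauberian theorem applied to $\int_0^x N(s)\,ds$, and the monotone density theorem to recover $N(x)\sim-\gamma\log x$. These ingredients are classical and the gap is repairable, but they are where the real work of S5$\Leftrightarrow$S7 lives, and your plan currently only waves at them.
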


Note that since $\nu(\epsilon,\infty)<\infty$ for any $\epsilon>0$, then $\cond{7}$ is equivalent to
\begin{enumerate}
\item[{\cond {7'}}]$ \label{a7'} \nu(x,\epsilon]/\log x\to -\gamma$ as $x\downarrow 0$.
\end{enumerate}
Also note that this condition cannot hold for a compound Poisson process, so that when it does hold then necessarily $\nu(0,\epsilon]=\infty$, which in turn implies that $X_t>0$ almost surely for each $t>0$ and thus $-t\log X_t$ is well defined for all $t>0$.

Several examples of subordinators fulfilling these conditions are given in \cite{AL-BAR}. A prominent member is the gamma process, where
\begin{align}
G_t(x)=\frac{\la^{\gamma}}{\Gamma(\gamma)}\int_0^x u^{\gamma-1}e^{-\la u}\,du.
\end{align}

The following is a generalization of Proposition 2.2 of \cite{AL-BAR} to the multidimensional and dependent case.
\begin{proposition}\label{prop:1}
For each $t>0$, let $(X_{i,t})_{1\le i\le n}$ be a random vector with almost surely positive components and assume that for some random vector $(X_i)_{1\le i\le n}$,
\begin{equation}
(-t\log X_{i,t})_{1\le i\le n}\dto (X_i)_{1\le i\le n}
\end{equation}
Then,
\begin{eqnarray}
\left(-t\log \left(\sum_{i=1}^k X_{i,t}\right)\right)_{1\le k\le n}\dto \left(\bigwedge_{i=1}^kX_i\right)_{1\le k\le n},\label{jj}
\end{eqnarray}
as $t\downarrow 0$.
\end{proposition}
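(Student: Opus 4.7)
The plan is to reduce the statement to a straightforward combination of the continuous mapping theorem and Slutsky's lemma, by exploiting the fact that as $t\downarrow 0$ a $\log$-sum-exp converges to a minimum.

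First I would set $Y_{i,t}=-t\log X_{i,t}$, so that $X_{i,t}=e^{-Y_{i,t}/t}$ and the quantity of interest rewrites as
\begin{equation}
-t\log\Bkla{\sum_{i=1}^k X_{i,t}}=-t\log\Bkla{\sum_{i=1}^k e^{-Y_{i,t}/t}}.
\end{equation}
Setting $M_{k,t}=\bigwedge_{i=1}^k Y_{i,t}$, the elementary sandwich
\begin{equation}
e^{-M_{k,t}/t}\le \sum_{i=1}^k e^{-Y_{i,t}/t}\le k\,e^{-M_{k,t}/t}
\end{equation}
yields, after taking $-t\log$,
\begin{equation}
0\le M_{k,t}-\Bkla{-t\log\sum_{i=1}^k X_{i,t}}\le t\log k.
\end{equation}
Thus the difference between the $k$th coordinate on the left of \eqref{jj} and $M_{k,t}$ is a deterministic quantity tending to $0$ uniformly in $k\le n$.

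Next, since the map $(y_1,\dots,y_n)\mapsto (\bigwedge_{i=1}^k y_i)_{1\le k\le n}$ from $\sR^n$ to $\sR^n$ is continuous, the hypothesis $(Y_{i,t})_{1\le i\le n}\dto (X_i)_{1\le i\le n}$ and the continuous mapping theorem give
\begin{equation}
\Bkla{\bigwedge_{i=1}^k Y_{i,t}}_{1\le k\le n}\dto \Bkla{\bigwedge_{i=1}^k X_i}_{1\le k\le n}.
\end{equation}
Combining this with the deterministic bound above via Slutsky's lemma delivers \eqref{jj}.

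I expect no real obstacle: the log-sum-exp sandwich is the single computational input, and the positivity assumption on the $X_{i,t}$ ensures the logarithms are well-defined. The only point worth noting is that the argument is entirely distribution-free in the dependence structure among the $X_{i,t}$, which is what makes this the promised multidimensional, dependent extension of Proposition 2.2 of \cite{AL-BAR}.
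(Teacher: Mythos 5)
Your proof is correct, and it takes a genuinely different route from the paper's. The paper first invokes the Skorokhod representation theorem to replace convergence in distribution by almost sure convergence on an auxiliary probability space, thereby reducing the claim to a statement about deterministic sequences; it then argues coordinate by coordinate, reduces to the case $n=2$, and handles the sum via the bound $0\le\log(a(t)+b(t))-\log a(t)\le\log 2$ after arranging $a(t)\ge b(t)$, finishing by induction. You instead work directly with the vector: the log-sum-exp sandwich
\begin{equation}
0\le \Bkla{\bigwedge_{i=1}^k Y_{i,t}}-\Bkla{-t\log\sum_{i=1}^k X_{i,t}}\le t\log k\le t\log n
\end{equation}
gives a \emph{deterministic} uniform bound on the discrepancy, so the continuous mapping theorem (applied to the continuous map $(y_1,\dots,y_n)\mapsto(\bigwedge_{i=1}^k y_i)_{1\le k\le n}$) plus the converging-together form of Slutsky's lemma finishes in one step, with no representation theorem, no reduction to $n=2$, and no induction. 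What your approach buys is economy and transparency: the entire probabilistic content is isolated in one application of the continuous mapping theorem, and the quantitative rate $t\log n$ of the approximation is made explicit. What the paper's approach buys is essentially a pedagogical link back to Proposition~2.2 of the cited reference (whose deterministic core it reuses); in terms of generality the two arguments are equivalent, and both are indifferent to the dependence structure among the $X_{i,t}$. One small point worth stating explicitly in a final write-up: Slutsky is being applied in $\sR^n$, i.e.\ you use that if $A_t\dto A$ and $\|B_t-A_t\|\to0$ in probability (here even surely), then $B_t\dto A$; since your bound is deterministic and uniform over $k\le n$, this is immediate.
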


\begin{proof}
It is well known that on a possibly different probability space we can take
$(\tilde X_{i,t})_{1\le i\le n}\disas (X_{i,t})_{1\le i\le n}$ and $(\tilde X_i)_{1\le i\le n}\disas(X_i)_{1\le i\le n}$,
where
\begin{equation}
(-t\log \tilde X_{i,t})_{1\le i\le n}\dto (\tilde X_1)_{1\le i\le n}
\end{equation}
almost surely.
Since any (Borel) function of $(\tilde X_{i,t})_{1\le i\le n}$ is distributed like that of $(X_{i,t})_{1\le i\le n}$ (and similarly for the limits) this implies that it suffices to show the validity of this proposition for the deterministic case, where the multidimensional convergence in (\ref{jj}) is equivalent to the convergence of each coordinate separately. Observing each such coordinate, it is apparent that it suffices to show this for the case $n=2$ and then proceed by induction. This can be concluded from Proposition~2.2 of \cite{AL-BAR}, but we would also like to point out the straightforward alternative below.

Note that if $-t\log a(t)\to a$ and $-t\log b(t)\to b$ then $-t\log(a(t)\wedge b(t))=(-t\log a(t))\vee(-t\log b(t))\to a\vee b$ and, similarly, $-t\log(a(t)\vee b(t))\to a\wedge b$, all as $t\downarrow 0$. Since $a(t)\wedge b(t)+a(t)\vee b(t)=a(t)+b(t)$ it therefore follows that it suffices to treat the case where $a(t)\ge b(t)$ for all $t>0$ and $a\le b$. For this case we have that
\begin{equation}
0\le \log(a(t)+b(t))-\log a(t)=\log\left(1+\frac{b(t)}{a(t)}\right)\le\log 2
\end{equation}
and thus $t\log(a(t)+b(t))-t\log a(t)\to 0$ as $t\downarrow 0$ and the proof is complete.
\end{proof}

\begin{remark}\label{re:1}\rm
Of course, if we assume in Proposition~\ref{prop:1} that $(X_{i,t})_{1\le i\le n}$ are independent, then $(-t\log X_{i,t})_{1\le i\le n}\dto (X_i)_{1\le i\le n}$ if and only if  $-t\log X_{i,t}\dto X_i$ for each $i$ and $(X_i)_{1\le i\le n}$ are independent as well (on an appropriate probability space). This will be needed in what follows.
\end{remark}

We now recall that if, in Proposition~\ref{prop:1}, $(X_{i,t})_{t\ge0}$ are independent subordinators, then $X_i$ are independent and are either constant or necessarily exponential. Thus, when they are all exponential, the distribution of the $k$th coordinate on the right side of \eqref{jj} is exponential as well, with parameter given by the sum of the first $k$ parameters for the the individual limits.

Now let $0=s_0<s_1<s_2<\ldots<s_n$ and, for $i=1,\ldots,n$, let $(X_{i,t})_{t\ge 0}$ be i.i.d. copies of $(X_t)_{t\ge 0}$. It follows from the stationary and independent increment property of the L\'evy process $X_t$ that
\begin{align}
X_{s_kt}=\sum_{i=1}^k \kla{X_{s_it}-X_{s_{i-1}t}}
\disas \sum_{i=1}^k X_{i,(s_{i}-s_{i-1})t}.
\end{align}
Consequently, with $Z_1,Z_2,\ldots$ being i.i.d. $\exp(1)$ random variables (so that $Z_i/\beta\disas\exp(\beta)$), applying \eqref{jj}  it follows that, as $t\downarrow0$,
\begin{align}\label{eq:fidi}
\left(-t\log X_{s_1t},-t\log X_{s_2t},\ldots,-t\log X_{s_nt}\right)\dto\frac{1}{\gamma}\left(\bigwedge_{i=1}^k\frac{Z_i}{s_i-s_{i-1}}\right)_{1\le k\le n}\ .
\end{align}

Hence, we see that we have convergence of the finite dimensional distributions of
$(-t\log X_{ts})_{s>0}$ to those of
some process $(\ee_s)_{s>0}$, where $\bkla{\ee_{s_1},\ldots,\ee_{s_n}}$ is distributed like the right hand side of (\ref{eq:fidi}).

In the next section we will identify this process, which turns out to be a known one and then show in the following section that the convergence of a truncated version of the process above holds in the sense of weak convergence in $\DD$.

\section{The extremal process}\label{sec:extremal}
 Recall that $Z_1,Z_2,\ldots$ are i.i.d. $\exp(1)$ random variables and let $M_n=\frac{1}{\gamma}\Mins k1n Z_k$. Then the process $n\cdot M_{[tn]+1}$ converges as $n\to\infty$ weakly to a process $\ee_t$, the so called  \lex{extremal process} (\cite{dwass}). This process has the following properties (see Section 4.3 in \cite{Resnick}):
\begin{enumerate}\setlength{\itemsep}{0ex}
\item $\ee$ is stochastically continuous and has a version in $\DD$ (from hereon this is the assumed version).
\item\label{huh} $\ee$ has non-increasing paths, is piecewise constant,  almost surely $\lim_{s\to 0}\ee_s=\infty$ and
$\lim_{s\to\infty}\ee_s=0$.
\item\label{finix} the finite dimensional distributions are given by the right hand side of (\ref{eq:fidi}),
in particular
\begin{align}\PP{\ee_{s_i}> x_i,i=1,\ldots,n}
= \exp\left(-\gamma\sum_{i=1}^n \maxs jik x_j\right)\ .
\end{align}
\item The holding times in $x$ are exponential with rate  $\gamma x$.
\item If the process jumps at time $t$  then  $\ee_t=\ee_{t-}\cdot U$, where $U$  is independent of $\set{X_s,0\leq s<t}$ (in an appropriate sense) and has a uniform distribution in $[0,1]$.
\end{enumerate}

Now let $\zx$ be a random variable, independent of $\{\ee_t,t\geq 0\}$ and define \begin{equation}\label{eq:eta}\zz_t:=\ee_t\mi\zx\ .\end{equation} The processes $\zz_t$ is a Markov process that inherits the above properties 1-5 from $\ee$, except for
\begin{itemize}
\item[\ref{huh}$^\ast$.] $\zz$ has non-increasing paths, is piecewise constant,  almost surely
$\lim_{s\to\infty}\zz_s=0$.
\item[\ref{finix}$^\ast$.] The finite dimensional distributions are given by
\begin{align}
\bkla{\zz_{s_1},\zz_{s_2},\ldots,\zz_{s_n}}\disas\left(\eta_0\wedge\frac{1}{\gamma}\left(\bigwedge_{i=1}^k\frac{Z_i}{s_i-s_{i-1}}\right)\right)_{1\le k\le n} \ .\label{5o}
\end{align}
\end{itemize}
For a proof note that the first jump below $\zx$ of the process $\zz$ will go uniformly into the interval $[0,\zx]$.  Since from then on the process $\zz$ will continue just like $\ee$, we only have to show that the holding time in $\zx$, given by $T=\inf\mset{t>0}{\zz_t\leq \zx}$,  has an exponential distribution with rate $\gamma \zx$. Indeed, we have for all $s>0$,
$\PP{T>s|\zx}=\PP{\ee_s>\zx|\zx}=e^{-\gamma s\zx}$. The property 3$^\ast$ is obvious from the construction.

It follows from the above properties that the transition probabilities of the Markov process $\zz$ are given by
\begin{align}\label{li}
\PP{\zz_{s+t}> x|\zz_s=y}=\ind{\{x< y\}}\exp\kla{-\gamma t x},\quad t,s\geq 0.
\end{align}
Hence, for bounded functions $f:\sR\to\sR$ the transition semi-group of the process is given by
\begin{align}
\tra tf(x):=\EX x{f(\zz_t)}=e^{-\gamma tx}f(x)+\gamma t \int_0^x f(y) e^{-\gamma ty}\,dy
\end{align}
and hence the limit
\begin{align}
\lim_{t\to 0}\frac{\EX x{f(\zz_t)}-f(x)}{t}\
&=-\gamma  x f(x)+\gamma \int_0^x f(y)\,dy
\end{align}
exists uniformly at least for $f\in\cnt$, where $\cnt$ is the class of continuous functions
 $f:\sR\to\sR$ that vanish as $|x|\to\infty$. Moreover, the Feller property holds, i.e. $\tra t\cnt\subset\cnt$ and $\tra tf(x)\to f(x)$ as $t\to 0$ for $f\in \cnt$.

 For $f\in\cnt$ the generator of the Markov process $\zz$ is then given by
\begin{align}
\gen f(x)
&=\gamma x\int_0^1  (f(xy)-f(x))\, dy=\gamma\int_0^x  (f(y)-f(x))\, dy.
\end{align}
We choose a smaller domain, namely those functions $f\in\cnt$ which are differentiable with derivative $f'\in\cnt$
(let $\dom\gen$ denote this class).
Then we can write
\begin{align}
\gen f(x)=-\gamma\int_0^x  u f'(u)\, du.
\end{align}
We enlarge the state space from $(0,\infty)$ to $\sR$ by setting $\gen f(x)=0$ for $x\le 0$. The reason is, that the process $-t\log X_{ts}$ will have values in $\sR$ rather than in $(0,\infty)$. Hence, by construction, $\zz_t$ will stay constant, if started in $x\leq 0$. Note that if $f\in\dom A$ then also $\tra tf\in\dom A$ since for $x\geq 0$
\begin{align}
(\tra tf)'(x)=e^{-\gamma tx}(f'(x)-\gamma tf(x))+\gamma t e^{-\gamma tx} f(x) =e^{-\gamma tx}f'(x).\label{qqq}
\end{align}

\section{Convergence in $\boldsymbol{\DD}$}
Recalling~(\ref{eq:eta}), the following is the main result of this paper.
\begin{theorem}
Suppose that the subordinator $(X)_{t\ge 0}$ satisfies one of the conditions of Theorem \ref{t1} and that $z\in(0,\infty)$. Then
\begin{align}
(z\wedge(-t\log X_{ts}))_{s\ge0}\dto (\zz_s)_{s\ge0}
\end{align}
as $t\to 0$ weakly in $(\DD,\mbox{J}_1)$ and  $\zx=z$.
\end{theorem}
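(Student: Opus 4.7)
The plan is to apply the standard generator-convergence theorem for Markov processes from \cite{EK}: if the prelimit generators $\ben t$ converge uniformly on a core of the limit generator $\gen$, and the initial laws agree, then $Y_t(s):=z\mi(-t\log X_{ts})$ converges weakly in $(\DD,\mbox{J}_1)$ to $\zz$. Here the initial condition is immediate, $Y_t(0)=z=\zx$. The Markov property of the prelimit is inherited from $W_t(s):=-t\log X_{ts}$, an invertible transformation of the L\'evy subordinator $X_{ts}$ (whose scaled L\'evy measure is $t\nu$); the cap $z\mi\cdot$ merely clips the (a.s.\ infinite) initial value of $W_t$, leaving its post-entry dynamics Markov.

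To compute $\ben t$ at a state $x<z$, parameterize $X_{ts}=e^{-x/t}$: a subordinator jump of size $u>0$, occurring according to the scaled L\'evy measure $t\nu$, drives $W_t$ from $x$ to $x-t\log(1+u e^{x/t})$. Hence, for $f\in\dom\gen$,
\[
\ben t f(x) = t\int_0^\infty\Bigl[f\bigl(x-t\log(1+u e^{x/t})\bigr)-f(x)\Bigr]\,d\nu(u).
\]
The substitution $v=t\log(1+u e^{x/t})$, equivalently $u=e^{-x/t}(e^{v/t}-1)$, rewrites this as $\int_0^\infty[f(x-v)-f(x)]\,d\mu^x_t(v)$, with $\mu^x_t$ the pushforward of $t\,d\nu$ under $u\mapsto v$. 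A parallel but slightly different formula applies at $x=z$, capturing the escape rate from the initial plateau (only jumps $u>e^{-z/t}$ trigger an exit), and is handled analogously after the substitution $v=-t\log u$.

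The central computation is the vague convergence $\mu^x_t\Rightarrow \gamma\cdot\ind{(0,x)}\,\text{Leb}$ as $t\downarrow 0$. For $[a,b]\subset(0,x)$, a direct calculation gives $\mu^x_t([a,b]) = t[\bar\nu(u(a))-\bar\nu(u(b))]$ with $u(v)=e^{-x/t}(e^{v/t}-1)$; both $u(a)$ and $u(b)$ decay exponentially in $1/t$, so condition (S7'), stating $\bar\nu(u)\sim -\gamma\log u$ as $u\downarrow 0$, yields $\mu^x_t([a,b])\to \gamma(b-a)$. For $[a,b]\subset(x,\infty)$ the preimage escapes to large $u$ where $\nu$ is finite, so the mass collapses. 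Consequently $\ben t f(x)\to \gamma\int_0^x[f(x-v)-f(x)]\,dv = \gen f(x)$ pointwise.

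The principal obstacle is upgrading this pointwise convergence to the uniform convergence on $\sR$ that Ethier--Kurtz requires. For $f\in\dom\gen$ the integrand satisfies $|f(x-v)-f(x)|\le\|f'\|_\infty v \mi 2\|f\|_\infty$, which combined with uniform tail and near-zero estimates on $\mu^x_t$ --- derived from (S7') together with $\int_0^1 u\,d\nu(u)<\infty$ --- should enable a uniform dominated-convergence argument. The boundary regions $x\downarrow 0$ (where both generators vanish by the paper's extension) and $x\uparrow z$ (where the integral formula changes character) require separate estimates. Once uniform convergence and the core property of $\dom\gen$ for $\gen$ are verified, the cited theorem yields weak convergence of $Y_t$ to $\zz$ in $(\DD,\mbox{J}_1)$.
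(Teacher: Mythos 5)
Your overall strategy coincides with the paper's: compute the generator of the time- and space-changed process, show it converges to $\gen$ on the core $\dom\gen$, and invoke the Ethier--Kurtz generator/semigroup convergence theorems. Your formula $t\int[f(x-t\log(1+ue^{x/t}))-f(x)]\,d\nu(u)$ is exactly the paper's $\ben t f(x)=t\int(f(-t\log(y+e^{-x/t}))-f(x))\,\nu(dy)$, and your vague-convergence computation of the pushforward measure is a repackaging of the paper's comparison of $t\,\nu(e^{-u/t}-e^{-x/t},1]$ with $\gamma u$. So there is no genuinely different route here.

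The genuine gap is that you stop exactly where the real work begins. You correctly identify that Ethier--Kurtz requires \emph{uniform} convergence of $\ben t f$ to $\gen f$ (on $(-\infty,z]$ suffices, by monotonicity of the paths), you call this ``the principal obstacle,'' and then you assert it ``should'' follow from a uniform dominated-convergence argument without producing one. This is the entire content of the paper's Lemma~\ref{uniform}, and it is not routine: condition \cond{7'} gives only the asymptotic ratio $\nu(y,1]/(-\gamma\log y)\to 1$ as $y\downarrow 0$ with no rate, while the points $y=e^{-u/t}-e^{-x/t}$ at which it must be invoked depend on both $x$ and $u$ and do not tend to $0$ uniformly. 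The paper's proof needs (i) the bound $\|f'\|\rho t^2$ on $x\le 0$ using $\rho=\int_0^1 y\,\nu(dy)<\infty$; (ii) an integration by parts turning the problem into estimating $\int_0^x|\gamma u-t\nu(e^{-u/t}-e^{-x/t},1]|\,du$; (iii) control of the cross term via the finiteness of $\int_0^\infty\frac{\log(v+1)-\log v}{v+1}\,dv$; and (iv) a three-region split of $(0,1)$ into $(0,e^{-z/t})$, $(e^{-z/t},\delta)$, $(\delta,1)$, with $\delta$ chosen so that $|1-\nu(y,1]/(-\gamma\log y)|<\epsilon/z^2$ and $T=z/(-\log\delta)$, to extract uniformity over $x\in[0,z]$. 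None of this is visible in your sketch, and your proposed domination $|f(x-v)-f(x)|\le\|f'\|v$ alone is insufficient near $v=0$ (the naive bound $t^2e^{x/t}\rho$ blows up for $x>0$). A secondary slip: at the plateau $x=z$ the underlying state $X_{ts}$ lies anywhere in $(0,e^{-z/t})$, not at $e^{-z/t}$, so your ``only jumps $u>e^{-z/t}$ trigger an exit'' description of the escape mechanism is not correct; the paper avoids this by working with the uncapped generator uniformly on $(-\infty,z]$ rather than treating $x=z$ as a special boundary state.
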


\begin{proof} Let us write $X_t=X_t'+X_t''$, where $X_t'$ has L\'evy measure $\nu'(A)=\nu(A\cap (0,1])$ and $X_t''$ has L\'evy measure $\nu''(A)=\nu(A\cap[1,\infty))$. That is, $X_t'$ captures the small jumps and $X_t''$ is a compound Poisson process with jumps at least of size one. It is well known that $X_t'$ and $X_t''$ are independent. Moreover, $X''_t=0$ for $t<\kappa$, where $\kappa$ is an exponential random variable so that, assuming $t$ to be small enough $X_t=X_t'$. Since we are interested in the limiting behaviour as $t\to 0$, we may assume that $\nu$ is concentrated on $(0,1)$. Then $X_t$ is a Markov process with generator (\cite{bertoin}) given by
\begin{align}
\len f(x)=\int_0^1 (f(x+y)-f(x))\nu(dy),\quad x\geq 0
\end{align}
for functions $f\in\dom\gen$. For fixed $t$ the process $\qq t_s=-t\log X_{ts}\mi z$ is a Markov process with sample paths in $\DD$. The time-change $X_s\to X_{ts}$ transforms $\len f$ into $t\len f(x)$, while the subsequent state-space transformation $X_t\to g(X_t)$, with $g(x)=-\log x$, changes $t\len f(x)$ to $t(\len f\circ g)(g^{-1}(x))$, see e.g. \cite{dyn}.  Hence the generator of the process $\qq t_s$ is given by
\begin{align}
\ben t f(x)&=t\int_0^1 (f(-t\log(y+e^{-x/t}))-f(x))\,\nu(dy).
\end{align}
For the transition semi-group of $\qq t$ we obtain
\begin{align}
\trb ts f(x)=\EE{f(-t\log X_{ts} \mi x)}
\end{align}
Hence $\trb tsf$ is continuous for $f\in\cnt$. Moreover $|\trb tsf(x)|\to 0$ as $|x|\to\infty$ by dominated convergence and $\trb tsf(x)\to f(x)$ as $s\to 0$ by dominated convergence and the fact that $-t\log X_{ts}\to\infty$ as $s\to 0$. Hence for every $t>0$ the process $\qq t$ has the Feller-property.

 In Lemma~\ref{uniform} to follow we will show that, for every $z>0$, $\ben t f\to \gen f$ uniformly on $(-\infty,z]$. As the process is nonincreasing and thus, one does not need to consider uniform convergence on the entire state space $\mathbb{R}$, it will follow from Theorem 6.1, p.28 in \cite{EK} that the respective transition operators converge, too, provided that $\dom A$ is a core for the generator. But this follows from Proposition 3.3, p.17 in \cite{EK} since $\dom A$ is dense in $\cnt$ and $\tra tf\in\dom A$ if $f\in\dom A$ (as was shown in \eqref{qqq}). From Theorem 2.5, p.167 in \cite{EK} it then follows, using the Feller-property of $\qq t$, that $\qq t$ tends to $\zz$ in $\DD$.  Since $-t\log X_{ts}$ tends to $\infty$ as $s\to 0$, it is clear that $\zx=z$.
\end{proof}

\begin{lemma}\label{uniform}
Suppose that condition (S7) of Theorem \ref{t1} holds and let $f\in\mathcal{C}_0$ be differentiable with $f'\in\mathcal{C}_0$ and recall
\begin{equation}
\ben t f(x)=t\int_{(0,1]} (f(-t\log(y+e^{-x/t}))-f(x))\,\nu(dy).
\end{equation}
and
\begin{equation}
\gen f(x)=\gamma\int_0^x(f(y)-f(x))1_{[0,\infty)}(x)\ .
\end{equation}
Then, for each $z>0$,
\begin{equation}
\lim_{t\downarrow 0}\sup_{x\in(-\infty,z]}\left|\ben tf(x)-\gen f(x)\right|=0\ .
\end{equation}
\end{lemma}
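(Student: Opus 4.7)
The plan is to rewrite $\ben tf(x)$ as an integral against $f'$ with the same structural form as $\gen f(x)=-\gamma\int_0^x u\,f'(u)\,du$ (valid for $x>0$; $\gen f=0$ for $x\le 0$), and then compare the two integrands pointwise using condition~(S7).

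I would start by writing $f(-t\log(y+e^{-x/t}))-f(x)=\int_0^y\partial_s f(-t\log(s+e^{-x/t}))\,ds$ and applying Fubini to convert the $\nu(dy)$-integral in $\ben tf(x)$ into one against $\nu(s,1]\,ds$. Substituting $u=-t\log(s+e^{-x/t})$, so that $s+e^{-x/t}=e^{-u/t}$ and $ds=-t^{-1}e^{-u/t}\,du$, would yield the representation
\begin{equation*}
\ben tf(x)=-\int_{u_*(x,t)}^x f'(u)\,N_t(u,x)\,du,\qquad N_t(u,x):=t\,\nu(e^{-u/t}-e^{-x/t},1],
\end{equation*}
with $u_*(x,t):=-t\log(1+e^{-x/t})\le 0$. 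Undoing the substitution via $a=e^{-u/t}-e^{-x/t}$, $du=-t(a+e^{-x/t})^{-1}\,da$, shows that on the portion of this integral with $u\le 0$ one has $a+e^{-x/t}=e^{-u/t}\ge 1$, so
\begin{equation*}
\Bigl|\int_{u_*}^{x\wedge 0}f'(u)\,N_t(u,x)\,du\Bigr|\le t^2\|f'\|_\infty\int_0^1\nu(a,1]\,da = t^2\|f'\|_\infty\,\rho,
\end{equation*}
which is $O(t^2)$ uniformly in $x\in(-\infty,z]$. This already settles the case $x\le 0$, where $\gen f=0$.

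For $x\in(0,z]$ the remaining task is to compare $-\int_0^x f'(u)N_t(u,x)\,du$ with $-\gamma\int_0^x u\,f'(u)\,du$. The key analytic input is that (S7), combined with the local boundedness of $\nu(\cdot,1]$ away from $0$, furnishes for every $\eta>0$ a constant $K_\eta<\infty$ with
\begin{equation*}
\bigl|\nu(a,1]-\gamma(-\log a)\bigr|\le \eta(-\log a)+K_\eta\quad\text{for all }a\in(0,1].
\end{equation*}
Using the identity $-\log(e^{-u/t}-e^{-x/t})=u/t-\log(1-e^{-(x-u)/t})$, this translates into the pointwise bound
\begin{equation*}
|N_t(u,x)-\gamma u|\le -(\gamma+\eta)\,t\log(1-e^{-(x-u)/t})+\eta u+tK_\eta,\qquad 0<u<x.
\end{equation*}
The substitution $v=(x-u)/t$ then gives $\int_0^x(-t\log(1-e^{-(x-u)/t}))\,du=t^2\int_0^{x/t}(-\log(1-e^{-v}))\,dv\le Ct^2$, where $C:=\int_0^\infty(-\log(1-e^{-v}))\,dv<\infty$ (the integrand behaves like $-\log v$ near $0$ and decays exponentially at $\infty$). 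Combined with $\int_0^x u\,du\le z^2/2$, this yields
\begin{equation*}
\int_0^x|f'(u)|\,|N_t(u,x)-\gamma u|\,du\le\|f'\|_\infty\bigl[(\gamma+\eta)Ct^2+\eta z^2/2+K_\eta z\,t\bigr],
\end{equation*}
and letting $t\downarrow 0$ followed by $\eta\downarrow 0$ drives the right-hand side to $0$ uniformly in $x\in(0,z]$.

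The principal analytical obstacle will be the logarithmic blow-up of $N_t(u,x)$ as $u\uparrow x$; this will be tamed by the local integrability of $-\log(1-e^{-v})$ near $v=0$, which ensures that the contribution near $u=x$ is $O(t^2)$ and that the leading error is the controllable $\eta$-term.
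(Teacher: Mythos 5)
Your argument is correct, and the overall skeleton coincides with the paper's: the same Fubini/substitution step turns $\ben tf(x)$ into $-\int f'(u)\,t\,\nu(e^{-u/t}-e^{-x/t},1]\,du$, the case $x\le 0$ and the portion $u\le 0$ are both disposed of as $O(t^2)$ uniformly via $e^{-u/t}\ge 1$, and the problem reduces to comparing $t\,\nu(e^{-u/t}-e^{-x/t},1]$ with $\gamma u$ on $[0,x]$. Where you genuinely diverge is in how that comparison is executed. The paper substitutes back to the $y$-variable, adds and subtracts $\gamma\log y$, and then splits $[0,1]$ into three regions $[\delta,1]$, $[0,e^{-z/t}]$ and $[e^{-z/t},\delta]$, estimating $\int \left|1-\frac{\nu(y,1]}{-\gamma\log y}\right|\frac{-\log y}{y+e^{-x/t}}\,dy$ separately on each and tracking the $x$-dependence through $e^{-z/t}$. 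You instead upgrade (S7) to the global two-parameter bound $|\nu(a,1]-\gamma(-\log a)|\le\eta(-\log a)+K_\eta$ on all of $(0,1]$ (legitimate, since $\nu(\cdot,1]$ is bounded away from $0$) and convert it, via the identity $-\log(e^{-u/t}-e^{-x/t})=u/t-\log(1-e^{-(x-u)/t})$, into a pointwise sandwich of the integrand; the singular piece is then absorbed by $\int_0^\infty(-\log(1-e^{-v}))\,dv=\pi^2/6<\infty$, which is exactly the counterpart of the paper's finite integral $\int_0^\infty\frac{\log(v+1)-\log v}{v+1}\,dv$. Your version buys a bound that is manifestly independent of $x\in(0,z]$ in one stroke and avoids the three-way domain decomposition and the choice of $T=z/(-\log\delta)$; the paper's version keeps the error in the multiplicative form $\left|1-\frac{\nu(y,1]}{-\gamma\log y}\right|$, which makes the role of (S7) more visible but requires the extra bookkeeping near $y=e^{-z/t}$. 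Both proofs use (S7) in an equivalent way and both are complete; yours is arguably the cleaner write-up of the final estimate.
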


\begin{proof}
Denote $\|f'\|\equiv\sup_{x\in\mathbb{R}}|f'(x)|$ ($<\infty$ as $f'\in\mathcal{C}_0$).
Since $|f(x)-f(y)|\le \|f'\||x-y|$ then, for $0<y\le 1$,
\begin{align}
|f(-t\log(y+e^{-x/t}))-f(x)|&\le \|f'\||-t\log(y+e^{-x/t}))-x|\nonumber\\
&=\|f'\||\log(y+e^{-x/t}))+\log e^{x/t}|t\nonumber\\
&=\|f'\|t\log(ye^{x/t}+1)\\ &\le \|f'\|tye^{x/t}\ .\nonumber
\end{align}
Thus, recalling that
\begin{equation}\rho\equiv\int_{(0,1]}y\nu(dy)\left(=\int_0^1\nu(y,1]dy\right)<\infty\ ,
\end{equation}
we have that for $x\le 0$
\begin{equation}
|\ben t f(x)|\le \|f'\|\rho t^2e^{x/t}\le \|f'\|\rho t^2
\end{equation}
Since $\gen f(x)=0$ for $x\le 0$, this implies that
\begin{equation}\label{eq:neg1}\lim_{t\downarrow 0}\sup_{x\le 0}|\ben t f(x)-\gen  f(x)|= 0\end{equation}
as $t\to 0$.

Next, note that for $x\ge 0$,
\begin{align}
&\int_{(0,1]}(f(-t\log(y+e^{-x/t}))-f(x))\,\nu(dy)\nonumber\\ &=-\int_{(0,1]}\int^x_{-t\log(y+e^{-x/t})}f'(u)du\nu(dy) \\
&=-\int_{-t\log(1+e^{-x/t})}^xf'(u)\nu\left(e^{-u/t}-e^{-x/t},1\right]du\nonumber
\end{align}

In particular, upon substituting $y=e^{-u/t}-e^{-x/t}$, so that \[dy=-e^{-u/t}du/t=-(y+e^{-x/t})du/t\ ,\]
we have that
\begin{align}\label{eq:neg2}
&\left|\int_{-t\log(1+e^{-x/t})}^0f'(u)\nu(e^{-u/t}-e^{-x/t},1]du\right|\nonumber\\ &\le \|f'\|\int_{-t\log(1+e^{-x/t})}^0\nu(e^{-u/t}-e^{-x/t},1]du\nonumber\\
&=t\|f'\|\int_{1-e^{-x/t}}^1\frac{\nu(y,1]}{y+e^{-x/t}}dy\\
&\le t\|f'\|\int_{1-e^{-x/t}}^1\nu(y,1]dy\nonumber\\
&\le t\|f'\|\int_0^1\nu(y,1]dy=t\|f'\|\rho\ .\nonumber
\end{align}
The last expression clearly vanishes as $t\downarrow 0$ and in particular when multiplying it by $t$. Thus the left side converges to zero uniformly on $x\in[0,\infty)$.

From (\ref{eq:neg1}), (\ref{eq:neg2}) and
\begin{equation}
\gen f(x)=\gamma\int_0^x(f(y)-f(x))dx=-\gamma\int_0^xf'(u)udu\ ,
\end{equation}
it remains to show that for each $z>0$
\begin{equation}
\lim_{t\downarrow0}\sup_{x\in[0,z]}\left|\int_0^xf'(u)\left(\gamma u-t\nu\left(e^{-u/t}-e^{-x/t},1\right]\right)du\right|=0\ .
\end{equation}
We clearly have that
\begin{align}
&\left|\int_0^xf'(u)\left(\gamma u-t\nu\left(e^{-u/t}-e^{-x/t},1\right]\right)du\right|\nonumber\\ &\le \|f'\|\int_0^x\left|\gamma u-t\nu\left(e^{-u/t}-e^{-x/t},1\right]\right|du\ .
\end{align}
Substituting $y=e^{-u/t}-e^{-x/t}$, adding and subtracting $\gamma\log y$ in the second line of the following equation and rearranging terms give
\begin{align}\label{eq:terms}
&\int_0^x\left|\gamma u-t\nu\left(e^{-u/t}-e^{-x/t},1\right]\right|du\nonumber\\
&=t^2\int_0^{1-e^{-x/t}}\left|-\gamma \log(y+e^{-x/t})-\nu(y,1]\right|\frac{dy}{y+e^{-x/t}}\nonumber\\
&\le \gamma t^2\int_0^1\left|1-\frac{\nu(y,1]}{-\gamma\log y}\right|\frac{-\log y}{y+e^{-x/t}}\,dy\\
&\qquad+\gamma t^2\int_0^1\frac{\log(y+e^{-x/t})-\log y}{y+e^{-x/t}}dy \ .\nonumber
\end{align}
Substituting $y=e^{-x/t}v$ gives
\begin{align}\label{eq:log}
\int_0^1\frac{\log(y+e^{-x/t})-\log y}{y+e^{-x/t}}dy&=\int_0^{e^{x/t}}\frac{\log(v+1)-\log v}{v+1}dv\nonumber\\
&\le\int_0^\infty\frac{\log(v+1)-\log v}{v+1}dv\ .
\end{align}
Since $\int_0^\epsilon (-\log v)dv=\epsilon(1-\log\epsilon)<\infty$ and since
\begin{equation}
\frac{\log(v+1)-\log v)}{v+1}=\frac{1}{v+1}\int_v^{v+1}\frac{1}{u}du\le \frac{1}{v^2}
\end{equation}
it follows that the right hand side of (\ref{eq:log}) is finite and thus the second term of the right hand side of (\ref{eq:terms}) converges to zero uniformly on $x\in[0,\infty)$.
Therefore, as the first term on the right hand side of (\ref{eq:terms}) is bounded above (on $x\in [0,z]$) by
\begin{align}
\gamma t^2\int_0^1\left|1-\frac{\nu(y,1]}{-\gamma\log y}\right|\frac{-\log y}{y+e^{-z/t}}\,dy\label{wowo}
\end{align}
it remains to show that \eqref{wowo} vanishes as $t\downarrow0$.

Clearly, for any $\delta\in(0,1)$,
\begin{align}\label{eq:delta1}
\int_\delta^1\left|1-\frac{\nu(y,1]}{-\gamma\log y}\right|\frac{-\log y }{y+e^{-z/t}}\,dy\le\int_\delta^1\left|1-\frac{\nu(y,1]}{-\gamma\log y}\right|\frac{-\log y}{y}\,dy<\infty\ ,
\end{align}
so that upon multiplying by $t^2$ the left side converges to zero. Also, note that
\begin{align}
\int_0^{e^{-z/t}}\frac{-\log y}{y+e^{-z/t}}\,dy&\le e^{z/t}\int_0^{e^{-z/t}}(-\log y)dy\nonumber\\ &=e^{z/t}\cdot e^{-z/t}\left(1-\log e^{-z/t}\right)=1+\frac{z}{t}
\end{align}
which, upon multiplication by $t^2$, vanishes as $t\downarrow 0$. Therefore, also
\begin{align}\label{eq:e-z}
\gamma t^2\int_0^{e^{-z/t}}\left|1-\frac{\nu(y,1]}{-\gamma\log y}\right|\frac{-\log y}{y+e^{-z/t}}\,dy
\end{align}
vanishes as $t\downarrow0$, since by the assumptions $\left|1-\frac{\nu(y,1]}{-\gamma\log y}\right|$ is bounded on $[0,z]$.

To complete the proof, in view of (\ref{eq:delta1}) and (\ref{eq:e-z}), it remains to show that for any $\epsilon>0$ there is some $\delta>0$ and some $T>0$, such that for all $0<t<T$
\begin{align}
t^2\int_{e^{-z/t}}^\delta\left|1-\frac{\nu(y,1]}{-\gamma\log y}\right|\frac{-\log y}{y+e^{-z/t}}\,dy<\epsilon\ .
\end{align}
By the assumption we can pick some $0<\delta<1$ such that, for all $0<y<\delta$,
\begin{align}
\left|1-\frac{\nu(y,1]}{-\gamma\log y}\right|<\frac{\epsilon}{z^2}\ .
\end{align}
Then, take $T=\frac{z}{-\log\delta}$ and note that $t<T$ if and only if $e^{-z/t}<\delta$. We now have that for all $0<t<T$,
\begin{align*}
t^2\int_{e^{-z/t}}^\delta\left|1-\frac{\nu(y,1]}{-\gamma\log y}\right|\frac{-\log y}{y}dy&< \frac{\epsilon t^2}{z^2}\int_{e^{-z/t}}^\delta\frac{-\log y}{y}dy\\
&\le \frac{\epsilon t^2}{z^2} (-\log e^{-z/t})\int_{e^{-z/t}}^\delta\frac{1}{y}dy\\
&=\frac{\epsilon t}{z}\left(\log\delta-\log e^{-z/t}\right)\\
&=\epsilon (t\log\delta+1)< \epsilon\
\end{align*}
and the proof is complete.
\end{proof}

\noindent
{\bf Acknowledgement:} We thank Shaul Bar Lev and Thomas Kurtz for useful discussions.

\bibliographystyle{amsplain}
\bibliography{subext}
\end{document}